\begin{document}

\theoremstyle{definition}
\newtheorem{mydef}{Definition}
\newtheorem{lem}[mydef]{Lemma}
\newtheorem{thm}[mydef]{Theorem}
\newtheorem{prop}[mydef]{Proposition}
\newtheorem{fact}[mydef]{Fact}
\newtheorem{example}[mydef]{Example}
\newtheorem{remark}[mydef]{Remark}
\newtheorem{cor}[mydef]{Corollary}
\newtheorem{claim}{Claim}
\newtheorem{question}[mydef]{Question}

\newcommand{\gn}[1]{\lceil #1 \rceil}

\newcommand{\fct}[2]{\prescript{#1}{}{#2}}

\newcommand{\concat}{%
  \mathord{
    \mathchoice
    {\raisebox{1ex}{\scalebox{.7}{$\frown$}}}
    {\raisebox{1ex}{\scalebox{.7}{$\frown$}}}
    {\raisebox{.5ex}{\scalebox{.5}{$\frown$}}}
    {\raisebox{.5ex}{\scalebox{.5}{$\frown$}}}
  }
}

\def\forkindep{\mathrel{\raise0.2ex\hbox{\ooalign{\hidewidth$\vert$\hidewidth\cr\raise-0.9ex\hbox{$\smile$}}}}}

\newcommand{\indep}[3]{#2 \underset{#1}{\forkindep} #3}

\parindent 0pt
\parskip 5pt

\title{Indiscernible extraction and Morley sequences}

\author{Sebastien Vasey}
\email{sebv@cmu.edu}
\address{Department of Mathematical Sciences, Carnegie Mellon University, Pittsburgh, Pennsylvania, USA}

\begin{abstract}
  We present a new proof of the existence of Morley sequences in simple theories. We avoid using the Erdős-Rado theorem and instead use only Ramsey's theorem and compactness. The proof shows that the basic theory of forking in simple theories can be developed using only principles from ``ordinary mathematics'', answering a question of Grossberg, Iovino and Lessmann, as well as a question of Baldwin.
\end{abstract}

\date{\today\\
AMS 2010 Subject Classification: Primary:  03C45. Secondary: 03B30, 03E30.}

\keywords{Forking; Morley sequences; Dual finite character; Simple theories}

\maketitle

\section{Introduction}

Shelah \cite[Lemma 9.3]{sh93} has shown that, in a simple first-order theory $T$, Morley sequences exist for every type. The proof proceeds by first building an independent sequence of length $\beth_{\left(2^{|T|}\right)^+}$ for the given type and then using the Erdős-Rado theorem together with Morley's method to extract the desired indiscernibles. 

After slightly improving on the length of the original independent sequence \cite[Appendix A]{primer}, Grossberg, Iovino and Lessmann observed that, in contrast, most of the theory of forking in a stable first-order theory $T$ does not need the existence of such ``big'' cardinals. The authors then asked whether the same could be said about simple theories, and so in particular whether there was another way to build Morley sequences there.

Baldwin (see \cite{baldwinfom} and \cite[Question 3.1.9]{baldwin-monster}) similarly asked\footnote{Akito Tsuboi \cite{tsuboi-dividing} has independently answered this question.} whether the equivalence between forking and dividing in simple theories had an alternative proof.

We answer those questions in the affirmative by showing how to extract a Morley sequence from any infinite independent sequence. Our construction relies on a property of forking we call \emph{dual finite character}. We show it holds in simple theories, and that the converse is also true (the latter was noticed by Itay Kaplan).

This paper was written while working on a Ph.D.\ thesis under the direction of Rami
Grossberg at Carnegie Mellon University and I would like to thank Professor Grossberg for his guidance and assistance in my research in general and in this work
specifically. I also thank John Baldwin, José Iovino, Itay Kaplan, Alexei Kolesnikov, Anand Pillay, and Akito Tsuboi for valuable comments on earlier versions of this paper.

\section{Preliminaries}

For the rest of this paper, fix a complete first-order theory $T$ in a language $L(T)$ and work inside its monster model $\mathfrak{C}$. We write $|T|$ for $|L(T)| + \aleph_0$. We denote by $\text{Fml} (L(T))$ the set of first-order formulas in the language $L(T)$. If $A$ is a set, we say a formula is \emph{over $A$} if it has parameters from $A$. For a tuple $\bar{a}$ in $\mathfrak{C}$ and $\phi$ a formula, we write $\models \phi[\bar{a}]$ instead of $\mathfrak{C} \models \phi[\bar{a}]$.

When $I$ is a linearly ordered set, $(\bar{a}_i)_{i \in I}$ are tuples, and $i \in I$, we write $\bar{a}_{<i}$ for $(\bar{a}_j)_{j < i}$. It is often assumed without comments that all the $\bar{a}_i$s have the same (finite) arity.

We assume the reader is familiar with forking. We will use the combinatorial definition stated e.g.\ in \cite[Definition 1.2]{sh93}. It turns out that our construction of Morley sequences does not rely on this exact definition, but only on abstract properties of forking such as invariance, extension, and symmetry.

Recall also the definition of a Morley sequence:

\begin{mydef}
  Let $I$ be a linearly ordered set. Let $\mathbf{I} := \langle \bar{a}_i \mid i \in I\rangle$ be a sequence of finite tuples of the same arity. Let $A \subseteq B$ be sets, and let $p \in S (B)$ be a type that does not fork over $A$.

  $\mathbf{I}$ is said to be an \emph{independent sequence for $p$ over $A$} if:
  \begin{enumerate}
    \item For all $i \in I$, $\bar{a}_i \models p$.
    \item For all $i \in I$, $\text{tp} (\bar{a}_i / B \bar{a}_{<i})$ does not fork over $A$. 
  \end{enumerate}

  $\mathbf{I}$ is said to be a \emph{Morley sequence for $p$ over $A$} if:
  
  \begin{enumerate}
    \item $\mathbf{I}$ is an independent sequence for $p$ over $A$.
    \item $\mathbf{I}$ is indiscernible over $B$.
  \end{enumerate}
\end{mydef}



\section{Morley sequences in simple theories}

It is well known that independent sequences can be built by repeated use of the extension property of forking. If the theory is stable, the existence of Morley sequences follows, because in such theories any sufficiently long sequence contains indiscernibles. The latter fact is no longer true in general, and in fact there are counterexamples among both simple \cite[p.~209]{sh197} and dependent \cite{kpsh975} theories. Thus a different approach is needed in the unstable case. Recall from the introduction that we do not want to use big cardinals, so Morley's method cannot be used. We can however make use of the following variation of the Ehrenfeucht-Mostowski theorem:

\begin{fact}[\cite{tentzieglerbook}, Lemma 5.1.3]\label{weak-indisc-extraction}
  Let $A$ be a set, and let $I$ be a linearly ordered set. Let $\mathbf{J} := \left< \bar{a}_j \mid j < \omega\right>$ be a sequence of finite tuples of the same arity. Then there exists a sequence $\mathbf{I} := \left< \bar{b}_i \mid i \in I \right>$, indiscernible over $A$ such that:

  For any $i_0 < \ldots < i_{n - 1}$ in $I$, for all \emph{finite} $q \subseteq \text{tp} (\bar{b}_{i_0} \dots \bar{b}_{i_{n - 1}} / A)$, there exists $j_0 < \ldots < j_{n - 1} < \omega$ so that $\bar{a}_{j_0} \dots \bar{a}_{j_{n - 1}} \models q$.
\end{fact}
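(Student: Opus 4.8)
The plan is to realize the desired sequence $\mathbf{I}$ as an interpretation of new constants satisfying a carefully chosen theory, and to establish the consistency of that theory by a single appeal to compactness backed by Ramsey's theorem. Introduce, for each $i \in I$, a block $\bar{b}_i$ of new constants of the common arity of the $\bar{a}_j$, and let $\Sigma$ consist of: (i) the elementary diagram of $\mathfrak{C}$ restricted to the parameters $A$; (ii) the \emph{indiscernibility scheme}, i.e.\ all sentences $\phi(\bar{b}_{i_0}, \dots, \bar{b}_{i_{n-1}}) \leftrightarrow \phi(\bar{b}_{i_0'}, \dots, \bar{b}_{i_{n-1}'})$ where $\phi$ ranges over formulas over $A$ and $i_0 < \dots < i_{n-1}$, $i_0' < \dots < i_{n-1}'$ range over increasing tuples from $I$; and (iii) for every formula $\psi(\bar{x}_0, \dots, \bar{x}_{n-1})$ over $A$ that is realized by \emph{no} increasing tuple $\bar{a}_{j_0} \dots \bar{a}_{j_{n-1}}$ of $\mathbf{J}$, and every increasing $i_0 < \dots < i_{n-1}$ in $I$, the sentence $\neg \psi(\bar{b}_{i_0}, \dots, \bar{b}_{i_{n-1}})$. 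Any realization of $\Sigma$ inside $\mathfrak{C}$ (which exists once $\Sigma$ is consistent, since $\mathfrak{C}$ is a monster model) is the sequence we want: scheme (ii) makes $\mathbf{I}$ indiscernible over $A$, and scheme (iii) guarantees that no formula in $\text{tp}(\bar{b}_{i_0} \dots \bar{b}_{i_{n-1}} / A)$ can fail to be realized by an increasing tuple of $\mathbf{J}$; equivalently, every finite $q \subseteq \text{tp}(\bar{b}_{i_0} \dots \bar{b}_{i_{n-1}} / A)$, being up to conjunction a single formula, is realized by some increasing tuple of $\mathbf{J}$.

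Second, I would reduce the consistency of $\Sigma$ to its finite subsets by compactness. A finite $\Sigma_0 \subseteq \Sigma$ mentions only finitely many parameters from $A$, finitely many formulas $\phi_1, \dots, \phi_r, \psi_1, \dots, \psi_s$ over $A$ (all in at most $n$ blocks of variables, say), and finitely many indices, which I may list as $i_1 < \dots < i_N$ in $I$. Define a finite coloring of the increasing $N$-tuples $[\omega]^N$ by recording, for each listed formula and each increasing selection of the appropriate number of coordinates out of the $N$ available, the truth value of that formula applied to the corresponding subtuple of $\bar{a}_{j_1}, \dots, \bar{a}_{j_N}$. There are only finitely many colors, so Ramsey's theorem yields an infinite homogeneous $H \subseteq \omega$.

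Third, fix $j_1 < \dots < j_N$ in $H$ and interpret $\bar{b}_{i_\ell}$ as $\bar{a}_{j_\ell}$ (and all other constants arbitrarily). The elementary-diagram sentences of (i) hold because we are inside $\mathfrak{C}$; the instances of the indiscernibility scheme (ii) in $\Sigma_0$ hold because homogeneity of $H$ forces equal truth values across increasing subtuples drawn from $H$, which is exactly what the chosen coloring records; and each avoidance sentence $\neg\psi_t(\bar{b}_{i_{\ell_0}}, \dots)$ in (iii) holds because its witnesses now form a genuine increasing tuple of $\mathbf{J}$, and $\psi_t$ was by hypothesis realized by no such tuple. Thus $\Sigma_0$ is satisfiable, so $\Sigma$ is consistent, completing the argument.

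I expect the only delicate point to be the bookkeeping in the Ramsey coloring: it must simultaneously account for every indiscernibility biconditional and every avoidance sentence appearing in $\Sigma_0$, comparing subtuples of several different lengths, so the coloring should record truth values on all relevant increasing sub-selections of the $N$ coordinates at once; verifying that this remains a finite coloring and that its homogeneity delivers all the required biconditionals is the one step needing care. Two features are worth highlighting: because consistency is checked one finite $\Sigma_0$ at a time, Ramsey's theorem is only ever applied to finitely many formulas, so no transfinite chain of homogeneous sets and no large cardinals enter, and the argument is uniform in $|T|$; and because the constants $\bar{b}_i$ for \emph{all} $i \in I$ are already present in $\Sigma$, the passage from the fixed $\omega$-sequence $\mathbf{J}$ to an indiscernible sequence indexed by the arbitrary linear order $I$ is handled by the same compactness step, with no separate stretching argument.
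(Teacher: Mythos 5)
Your proof is correct, but note that the paper offers no proof of this statement at all: it is imported as a Fact from Tent--Ziegler (Lemma 5.1.3), precisely because the standard argument uses only Ramsey's theorem and compactness, which is the point of the paper. Your write-up is essentially that standard argument --- new constants for the $\bar{b}_i$, the indiscernibility scheme, the ``avoidance'' scheme forcing the EM-type of $\mathbf{I}$ to be finitely based on $\mathbf{J}$, and a Ramsey-homogeneous set to satisfy each finite fragment --- and the one delicate point you flag (that homogeneity across $N$-sets, with the coloring recording all increasing subselections, yields equal truth values for different subselections of a single $N$-set drawn from $H$) does go through, e.g.\ by extending any increasing $n_\phi$-tuple from $H$ to an $N$-set occupying the first $n_\phi$ positions and comparing via homogeneity.
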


Do we get a Morley sequence if we apply Fact \ref{weak-indisc-extraction} to an independent sequence? In general, we see no reason why it should be true. However, we will see that it \emph{is} true if we assume the following local definability property of forking:

\begin{mydef}[Dual finite character]\label{def-dfc}
  Forking is said to have \emph{dual finite character (DFC)} if whenever $\text{tp} (\bar{c} / A \bar{b})$ forks over $A$, there is a formula $\phi (\bar{x}, \bar{y})$ over $A$ such that:

  \begin{itemize}
    \item $\models \phi[\bar{c}, \bar{b}]$, and:
    \item $\models \phi[\bar{c}, \bar{b}']$ implies $\text{tp} (\bar{c} / A \bar{b}')$ forks over $A$.
  \end{itemize}
\end{mydef}

A variation of DFC appears as property A.7' in \cite{makkai-survey}, but we haven't found any other explicit occurrence in the literature. Notice that DFC immediately implies something stronger:

\begin{prop}\label{lem-dfc}
  Assume forking has DFC. Assume $p := \text{tp} (\bar{c} / A \bar{b})$ forks over $A$, and $\phi (\bar{x}, \bar{y})$ is as given by Definition \ref{def-dfc}. Then $\text{tp} (\bar{c}' / A) = \text{tp} (\bar{c} / A)$ and $\models \phi[\bar{c}', \bar{b}']$ imply $\text{tp} (\bar{c}' / A \bar{b}')$ forks over $A$.
\end{prop}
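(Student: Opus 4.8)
The plan is to deduce the stronger statement from the raw DFC property by transporting the situation along an automorphism of the monster model. The only ingredients I will use are homogeneity of $\mathfrak{C}$ and invariance of forking under automorphisms fixing $A$ (one of the abstract properties of forking the paper announces it relies on), together with the elementary fact that the truth value of a formula over $A$ is preserved by any automorphism fixing $A$ pointwise.

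First, from $\text{tp}(\bar{c}'/A) = \text{tp}(\bar{c}/A)$ I would invoke homogeneity to produce an automorphism $\sigma \in \text{Aut}(\mathfrak{C}/A)$ with $\sigma(\bar{c}) = \bar{c}'$. Next, assuming $\models \phi[\bar{c}', \bar{b}']$, I would set $\bar{b}^{*} := \sigma^{-1}(\bar{b}')$ and apply $\sigma^{-1}$: since $\phi$ is over $A$ and $\sigma^{-1}$ fixes $A$, and since $\sigma^{-1}(\bar{c}') = \bar{c}$, this yields $\models \phi[\bar{c}, \bar{b}^{*}]$. The second clause of the DFC property for $\bar{c}$, as recorded in Definition \ref{def-dfc}, then gives that $\text{tp}(\bar{c}/A\bar{b}^{*})$ forks over $A$. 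Finally I would push the conclusion forward along $\sigma$: by invariance of forking, $\text{tp}(\sigma(\bar{c})/A\,\sigma(\bar{b}^{*}))$ forks over $A$, and since $\sigma(\bar{c}) = \bar{c}'$, $\sigma(\bar{b}^{*}) = \bar{b}'$, and $\sigma$ fixes $A$, this is precisely the assertion that $\text{tp}(\bar{c}'/A\bar{b}')$ forks over $A$.

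I do not expect any serious obstacle: the argument is essentially mechanical once the automorphism is in hand. The two points requiring care are purely matters of bookkeeping, namely keeping straight the directions of $\sigma$ and $\sigma^{-1}$, and verifying that $\phi$ genuinely has all of its parameters in $A$—guaranteed by Definition \ref{def-dfc}—so that its truth value is indeed invariant under $\sigma$ and $\sigma^{-1}$. The conceptual content is simply that DFC, although stated for a fixed witness $\bar{c}$, automatically upgrades to permit replacing $\bar{c}$ by any $A$-conjugate, because both forking and the defining formula $\phi$ are invariant over $A$.
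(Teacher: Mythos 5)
Your argument is correct and is essentially identical to the paper's proof: both transport $\bar{b}'$ back to the side of $\bar{c}$ by an $A$-automorphism (your $\sigma^{-1}$ is the paper's $f$), apply the DFC clause there, and return via invariance of forking. Only the naming and the direction in which the automorphism is introduced differ.
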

\begin{proof}
  Assume $\text{tp} (\bar{c}' / A) = \text{tp} (\bar{c} / A)$. Let $f$ be an automorphism of $\mathfrak{C}$ fixing $A$ such that $f (\bar{c}') = \bar{c}$. Assume $\models \phi[\bar{c}', \bar{b}']$. Applying $f$, $\models \phi[\bar{c}, f(\bar{b}')]$. Since $\phi$ witnesses DFC, $\text{tp} (\bar{c} / A f (\bar{b}'))$ forks over $A$. Applying $f^{-1}$ and using invariance of forking, $\text{tp} (\bar{c}' / A \bar{b}')$ forks over $A$.
\end{proof}


\begin{thm}\label{dfc-morley}
  Assume forking has DFC\@. Let $A \subseteq B$ be sets. Let $p \in S (B)$ be a type that does not fork over $A$. Let $I$ be a linearly ordered set. Then there is a Morley sequence $\mathbf{I} := \left<\bar{b}_i \mid i \in I\right>$ for $p$ over $A$.
\end{thm}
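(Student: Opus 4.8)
The plan is to build the Morley sequence in two stages: first produce a countable independent sequence for $p$ over $A$ by the usual forking machinery, then feed it into Fact \ref{weak-indisc-extraction} to extract an indiscernible sequence of the prescribed order type $I$, and finally verify that the extracted sequence is still independent for $p$ over $A$. Indiscernibility over $B$ will come for free from the Fact; the real content lies in showing that the nonforking clause of independence survives the extraction, and this is precisely the step that uses DFC.

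First I would construct, by recursion on $j < \omega$ and repeated use of the extension property of forking, a sequence $\mathbf{J} := \left< \bar{a}_j \mid j < \omega \right>$ such that each $\bar{a}_j \models p$ and $\text{tp}(\bar{a}_j / B \bar{a}_{<j})$ does not fork over $A$; that is, $\mathbf{J}$ is an independent sequence for $p$ over $A$. Next I would apply Fact \ref{weak-indisc-extraction} to $\mathbf{J}$ with base set $B$ (rather than $A$) and index set $I$, obtaining $\mathbf{I} := \left< \bar{b}_i \mid i \in I \right>$ indiscernible over $B$ and enjoying the stated realization property over $B$. That each $\bar{b}_i \models p$ is then a one-formula argument: if some $\psi(\bar{x}) \in p$ failed on $\bar{b}_i$, then $\neg \psi$ (a formula over $B$) would lie in $\text{tp}(\bar{b}_i / B)$, so by the realization property some $\bar{a}_j$ would satisfy $\neg \psi$, contradicting $\bar{a}_j \models p$.

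The heart of the proof is verifying clause (2) of independence, namely that $\text{tp}(\bar{b}_i / B \bar{b}_{<i})$ does not fork over $A$ for every $i \in I$. I would argue by contradiction. If this type forks over $A$, then by finite character of forking there are finitely many indices $i_0 < \cdots < i_{m-1} < i$ and a finite tuple $\bar{d}$ from $B$ so that, writing $\bar{c} := \bar{b}_i$ and $\bar{b} := \bar{d}\,\bar{b}_{i_0} \cdots \bar{b}_{i_{m-1}}$, the type $\text{tp}(\bar{c} / A\bar{b})$ forks over $A$. Let $\phi(\bar{x}, \bar{y})$ be the formula over $A$ supplied by DFC, so $\models \phi[\bar{c}, \bar{b}]$. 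Reading $\phi[\bar{c}, \bar{b}]$ as a single formula over $B$ in the tuples $\bar{b}_{i_0}, \ldots, \bar{b}_{i_{m-1}}, \bar{b}_i$ (its parameters $\bar{d}$ lie in $B$), it belongs to $\text{tp}(\bar{b}_{i_0} \cdots \bar{b}_{i_{m-1}} \bar{b}_i / B)$, so the realization property of Fact \ref{weak-indisc-extraction} yields indices $j_0 < \cdots < j_{m-1} < j_m < \omega$ with $\models \phi[\bar{a}_{j_m},\, \bar{d}\,\bar{a}_{j_0} \cdots \bar{a}_{j_{m-1}}]$. Now both $\bar{a}_{j_m}$ and $\bar{c} = \bar{b}_i$ realize $p$, hence they have the same type over $A$ (namely $p \upharpoonright A$), so Proposition \ref{lem-dfc} applies and gives that $\text{tp}(\bar{a}_{j_m} / A\,\bar{d}\,\bar{a}_{j_0} \cdots \bar{a}_{j_{m-1}})$ forks over $A$. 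But $A\,\bar{d}\,\bar{a}_{j_0} \cdots \bar{a}_{j_{m-1}} \subseteq B \bar{a}_{<j_m}$, and $\text{tp}(\bar{a}_{j_m} / B \bar{a}_{<j_m})$ does not fork over $A$ by independence of $\mathbf{J}$; by monotonicity this contradicts the previous sentence. Hence clause (2) holds, and $\mathbf{I}$ is the desired Morley sequence.

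The main obstacle is exactly this transfer of nonforking from the extracted sequence back to $\mathbf{J}$. Fact \ref{weak-indisc-extraction} only guarantees that finite fragments of the type of the indiscernibles are realized in the original sequence — a one-directional link — so without extra input there is no reason a forking configuration among the $\bar{b}_i$ should be reflected among the $\bar{a}_j$. DFC is what converts ``forks over $A$'' into a single first-order formula that the realization property can carry downstairs, and the slight strengthening in Proposition \ref{lem-dfc} is needed because the witness $\bar{a}_{j_m}$ found downstairs is a different realization of $p$ than the original $\bar{c}$, so one must preserve forking knowing only that the two agree over $A$.
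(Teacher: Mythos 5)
Your proposal is correct and follows essentially the same route as the paper's own proof: build a countable independent sequence by extension, extract indiscernibles over $B$ via Fact \ref{weak-indisc-extraction}, and use the DFC formula together with Proposition \ref{lem-dfc} to pull a hypothetical forking configuration back down to $\mathbf{J}$ for a contradiction. The only differences are cosmetic (you make explicit the appeal to finite character and monotonicity that the paper leaves implicit).
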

\begin{proof}
  By repeated use of the extension property of forking, build an independent sequence $\mathbf{J} := \left< \bar{a}_j \mid j < \omega \right>$ for $p$ over $A$. 

  Let $\mathbf{I} := \left< \bar{b}_i \mid i \in I\right>$ be indiscernible over $B$ as described by Fact \ref{weak-indisc-extraction}. We claim $\mathbf{I}$ is as required. 

  It is indiscernible over $B$, and for every $i \in I$, every $\bar{b}_i$ realizes $p$: If $\bar{b}_i \not \models p$, fix a formula $\phi (\bar{x}, \bar{b}) \in p$ so that $\models \neg \phi[\bar{b}_i, \bar{b}]$. By the defining property of $\mathbf{I}$, there exists $j < \omega$ so that $ \models \neg \phi[\bar{a}_j, \bar{b}]$, so $\bar{a}_j \not \models p$, a contradiction.

It remains to see that for every $i \in I$, $p_i := \text{tp} (\bar{b}_i / B \bar{b}_{<i})$ does not fork over $A$. Assume not, and fix $i \in I$ so that $p_i$ forks over $A$. Fix $\bar{b} \in B$ and $i_0 < \ldots < i_{n - 1} < i$ such that $p_i' := \text{tp} (\bar{b}_i / A  \bar{b}_{i_0} \dots \bar{b}_{i_{n - 1}} \bar{b})$ forks over $A$. Fix $\phi (\bar{x}, \bar{b}_{i_0} \dots \bar{b}_{i_{n - 1}} \bar{b}) \in p_i'$ a formula over $A$ witnessing DFC\@. 

Find $j_0 < \ldots < j_n < \omega$ such that $\models \phi[\bar{a}_{j_n}, \bar{a}_{j_0} \dots \bar{a}_{j_{n - 1}}\bar{b}]$. Since it has already been observed that $\text{tp} (\bar{a}_{j_n} / A) = \text{tp} (\bar{b}_i / A) = p \upharpoonright A$, Proposition \ref{lem-dfc} implies that $\text{tp} (\bar{a}_{j_n} / A \bar{a}_{j_0} \dots \bar{a}_{j_{n - 1}} \bar{b})$ forks over $A$, contradicting the independence of $\mathbf{J}$.
\end{proof}

We now show that a simple theory has DFC (this was essentially already observed by Makkai). Recall \cite[Theorem 2.4]{kim-sym} that $T$ is simple exactly when forking has the symmetry property. Moreover, the methods of \cite{adler-rank} show that the equivalence can be proven without using Morley sequences. The key is \cite[Theorem 3.6]{adler-rank}, which shows (without using Morley sequences) that if the $D$-rank is bounded, then symmetry holds.

\begin{lem}\label{simple-dfc}
  Assume $T$ is simple. Then forking has DFC.
\end{lem}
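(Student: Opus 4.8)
The plan is to exploit the symmetry property of forking, which (as recalled just above) characterizes simple theories and admits a proof not relying on Morley sequences. The guiding idea is that symmetry lets me transport a forking witness from the ``$\bar{c}$ side'' to the ``$\bar{b}$ side'', where a single definable formula naturally furnishes the witness required by Definition \ref{def-dfc}.

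First I would start from the hypothesis that $\text{tp}(\bar{c} / A \bar{b})$ forks over $A$. Applying symmetry, $\text{tp}(\bar{b} / A \bar{c})$ forks over $A$ as well. Since $\text{tp}(\bar{b} / A \bar{c})$ is a complete, hence conjunction-closed, type, this forking is witnessed by a single formula in it: there is a formula $\chi(\bar{y}, \bar{z})$ over $A$ with $\models \chi[\bar{b}, \bar{c}]$ such that $\chi(\bar{y}, \bar{c})$ forks over $A$.

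Next I would simply set $\phi(\bar{x}, \bar{y}) := \chi(\bar{y}, \bar{x})$, swapping the roles of the two variable tuples. This is a formula over $A$, and $\models \phi[\bar{c}, \bar{b}]$ holds because $\models \chi[\bar{b}, \bar{c}]$, giving the first bullet of Definition \ref{def-dfc}. For the second bullet, suppose $\models \phi[\bar{c}, \bar{b}']$, i.e.\ $\models \chi[\bar{b}', \bar{c}]$. Then $\chi(\bar{y}, \bar{c}) \in \text{tp}(\bar{b}' / A \bar{c})$, and since $\chi(\bar{y}, \bar{c})$ forks over $A$, the type $\text{tp}(\bar{b}' / A \bar{c})$ forks over $A$. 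A final application of symmetry yields that $\text{tp}(\bar{c} / A \bar{b}')$ forks over $A$, as required.

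The only point demanding care is the bookkeeping of which tuple is the variable and which the parameter, together with the (routine) observation that whether a formula forks over $A$ depends on the formula and its fixed parameters alone, not on any particular realization. I do not expect a serious obstacle here, precisely because all the genuine mathematical content has been delegated to symmetry, whose Morley-sequence-free proof is cited above; what remains is a short formal manipulation requiring no new combinatorics and, in particular, no large cardinals.
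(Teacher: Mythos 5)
Your proof is correct and follows essentially the same route as the paper: apply symmetry to move the forking to $\text{tp}(\bar{b}/A\bar{c})$, extract a single forking formula from that complete type, swap the variable and parameter roles, and apply symmetry once more. The paper's proof is the same argument, just slightly more terse about why a single witnessing formula exists.
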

\begin{proof}
  Assume $p := \text{tp} (\bar{c} / A \bar{b})$ fork over $A$. By symmetry, $q := \text{tp} (\bar{b} / A \bar{c})$ forks over $A$. Fix $\psi (\bar{y}, \bar{x})$ over $A$ such that $\psi (\bar{y}, \bar{c}) \in q$ witnesses forking, i.e.\ if $\models \psi[\bar{b}', \bar{c}]$ then $\text{tp} (\bar{b}' / A \bar{c})$ forks over $A$.

    Let $\phi (\bar{x}, \bar{y}) := \psi (\bar{y}, \bar{x})$. Then $\phi (\bar{x}, \bar{b}) \in p$, and if $\models \phi[\bar{c}, \bar{b}']$, then $\models \psi[\bar{b}', \bar{c}]$, so $\text{tp} (\bar{b}' / A \bar{c})$ forks over $A$, so by symmetry, $\text{tp} (\bar{c} / A \bar{b}')$ forks over $A$. This shows $\phi (\bar{x}, \bar{y})$ witnesses DFC.
\end{proof}

\begin{cor}[Existence of Morley sequences in simple theories]\label{simple-morley}
  Assume $T$ is simple. Let $A \subseteq B$ be sets. Let $p \in S (B)$ be a type that does not fork over $A$. Let $I$ be a linearly ordered set. Then there is a Morley sequence $\mathbf{I} := \left<\bar{b}_i \mid i \in I\right>$ for $p$ over $A$.
\end{cor}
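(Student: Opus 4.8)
The plan is to observe that this is nothing more than a chaining of the two results immediately preceding it, so no new argument is required. First I would invoke Lemma \ref{simple-dfc}: since $T$ is assumed simple, forking has dual finite character. This is the one genuine hypothesis that Theorem \ref{dfc-morley} requires beyond the data already supplied, and Lemma \ref{simple-dfc} supplies it for free from simplicity (which, as the surrounding discussion recalls, is equivalent to symmetry of forking by \cite[Theorem 2.4]{kim-sym}).

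With DFC established, I would check that the remaining hypotheses of Theorem \ref{dfc-morley} match the data of the corollary verbatim: we are given sets $A \subseteq B$, a type $p \in S(B)$ that does not fork over $A$, and a linearly ordered set $I$. These are exactly the inputs to Theorem \ref{dfc-morley}. Applying that theorem then produces a Morley sequence $\mathbf{I} := \left<\bar{b}_i \mid i \in I\right>$ for $p$ over $A$, which is precisely the conclusion sought.

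I do not expect any obstacle here, since all the substantive work has already been carried out: the extraction of a Morley sequence from an arbitrary infinite independent sequence under DFC (Theorem \ref{dfc-morley}, built on the weak indiscernible extraction of Fact \ref{weak-indisc-extraction}), and the verification that simple theories enjoy DFC (Lemma \ref{simple-dfc}, obtained from symmetry). The corollary is therefore just the conjunction of these two facts, and the proof amounts to applying Lemma \ref{simple-dfc} to feed Theorem \ref{dfc-morley}.
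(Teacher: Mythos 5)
Your proposal is correct and is exactly the paper's own proof: the paper proves this corollary by combining Lemma \ref{simple-dfc} with Theorem \ref{dfc-morley}, just as you do. Your additional check that the corollary's hypotheses match the theorem's inputs is accurate but adds nothing beyond the paper's one-line argument.
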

\begin{proof}
  Combine Lemma \ref{simple-dfc} and Theorem \ref{dfc-morley}.
\end{proof}

We end by closing the loop on our study of DFC: Lemma \ref{simple-dfc} shows that simplicity implies DFC, but it turns out that they are equivalent! This was pointed out by Itay Kaplan in a personal communication. Definition \ref{weak-dfc-def} and (\ref{eq-dfc}) implies (\ref{eq-weak-dfc}) implies (\ref{eq-tsimple}) in Theorem \ref{dfc-simple} below are due to Kaplan, and I am grateful to him for allowing me to include them here. 

The key is to observe that symmetry fails very badly when the theory is not simple:

\begin{fact}[\cite{chernikov-ntp2}, Lemma 6.16]\label{fail-symmetry}
  Assume $T$ is \emph{not} simple. Then there is a model $M$ and tuples $\bar{b}, \bar{c}$ such that $\text{tp} (\bar{b} / M \bar{c})$ is finitely satisfiable in $M$, but $\text{tp} (\bar{c} / M \bar{b})$ divides over $M$.
\end{fact}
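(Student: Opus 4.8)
The plan is to deduce the asymmetric configuration directly from the tree property. Since $T$ is not simple, some formula $\phi(\bar x, \bar y)$ has the tree property, and by Shelah's dichotomy this formula (or an associated one) witnesses either the tree property of the second kind (TP2) or of the first kind (TP1). I would treat TP2 as the main case and handle TP1 by an analogous extraction, because in both cases the combinatorics supply, along one axis, an indiscernible family that is $k$-inconsistent for $\phi$ (the source of dividing) and, along another axis, a consistent ``transversal'' or ``branch'' (the source of a coheir). The whole point is to read off $\bar b$ and $\bar c$ so that these two features land on opposite sides.

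Concretely, in the TP2 case I would start from a mutually indiscernible array $(\bar a_{i,j})_{i,j<\omega}$ in which each row $(\bar a_{i,j})_{j<\omega}$ is $k$-inconsistent for $\phi$ while every transversal $\{\phi(\bar x, \bar a_{i,f(i)}) : i<\omega\}$ is consistent. First I would base the array over a model: choose $M \supseteq \bigcup_{i\geq 1}\mathrm{row}_i$ over which row $0$ remains indiscernible, and, building the array as a coheir array, arrange that $\operatorname{tp}(\bar a_{0,\bullet}/M)$ is finitely satisfiable in a small submodel. Then put $\bar b := \bar a_{0,0}$ and let $\bar c$ realize the transversal $\{\phi(\bar x, \bar a_{i,0}) : i<\omega\}$ through column $0$; in particular $\models \phi[\bar c,\bar b]$.

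The dividing direction is then immediate: the formula $\phi(\bar x, \bar b)=\phi(\bar x, \bar a_{0,0})$ lies in $\operatorname{tp}(\bar c/M\bar b)$, and it divides over $M$ because $(\bar a_{0,j})_{j<\omega}$ is an $M$-indiscernible sequence starting at $\bar b$ along which $\{\phi(\bar x, \bar a_{0,j})\}_{j}$ is $k$-inconsistent. The \emph{main obstacle} is the reverse direction, namely that $\operatorname{tp}(\bar b/M\bar c)$ is finitely satisfiable in $M$. The trouble is that $\bar c$ singles $\bar b=\bar a_{0,0}$ out from the other row-$0$ elements, since it satisfies $\phi(\bar x, \bar a_{0,j})$ for fewer than $k$ values of $j$, so one cannot reproduce $\operatorname{tp}(\bar b/M\bar c)$ inside $M$ using row $0$ alone. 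The remedy is to exploit the remaining rows: as $\bar c$ realizes the entire column-$0$ transversal, $\models\phi[\bar c,\bar a_{i,0}]$ for every $i\geq 1$ with $\bar a_{i,0}\in M$, and, choosing $\bar c$ as a sufficiently generic transversal, the column $(\bar a_{i,0})_{i<\omega}$ stays indiscernible over $M\bar c$, so $\bar a_{0,0}$ has the same type over $M\bar c$ as elements of $M$. Together with the coheir property built into the array this should yield finite satisfiability of $\operatorname{tp}(\bar b/M\bar c)$ in $M$; making this interaction precise, over a single model $M$, is the delicate part.

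Finally, for the TP1 case I would run the same extraction on a tree $(\bar a_\eta)_{\eta}$ in place of the array, taking $\bar b$ to be a node and $\bar c$ a realization of a branch through it: inconsistency along pairwise-incomparable nodes supplies dividing of $\operatorname{tp}(\bar c/M\bar b)$, while the branch, chosen generically, should keep $\bar b$ finitely satisfiable over $M\bar c$. The two cases together cover every non-simple $T$ and produce the required $M$, $\bar b$, $\bar c$.
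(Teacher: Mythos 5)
First, note that the paper does not prove this statement at all: it is imported as a black box from Chernikov's work on NTP2 theories, so any proof you give is necessarily a different route from the paper's. Your overall instinct --- extract the asymmetric pair from a tree-property configuration, with dividing supplied by a $k$-inconsistent $M$-indiscernible family and finite satisfiability by some genericity of $\bar{b}$ over $M$ --- is the right one, and the dividing half of your argument is essentially fine once you have actually produced a model $M$ containing the rows $i \geq 1$ over which row $0$ remains indiscernible (this needs an argument, but a standard one). Two architectural remarks before the main problem: the detour through the TP1/TP2 dichotomy is both unnecessary (the plain tree property already gives a $k$-inconsistent family of siblings together with consistent branches, which is all your extraction uses) and costly, since ``TP implies TP1 or TP2'' is itself a substantial theorem; and your TP1 case is only a one-sentence gesture, so the proof would be incomplete even if the TP2 case were sound.

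The genuine gap is the finite-satisfiability half, and the mechanism you propose for it cannot work as stated. You claim that the column $(\bar{a}_{i,0})_{i<\omega}$ ``stays indiscernible over $M\bar{c}$, so $\bar{a}_{0,0}$ has the same type over $M\bar{c}$ as elements of $M$.'' But $M$ contains $\bar{a}_{i,0}$ for all $i \geq 1$, so the column cannot be indiscernible over $M$ (the formula $\bar{y} = \bar{a}_{1,0}$ already separates its entries), and no element of $M$ can have the same type over $M$ as $\bar{a}_{0,0} \notin M$; at best one could hope that every formula of $\text{tp}(\bar{a}_{0,0}/M\bar{c})$ is realized by some $\bar{a}_{i,0} \in M$, and you give no argument for that. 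The deeper obstruction is the order of construction: a coheir over $M\bar{c}$ is normally obtained by choosing $\bar{b}$ \emph{last}, as a realization of $\lim_D \upharpoonright M\bar{c}$ for an ultrafilter $D$ on tuples from $M$, whereas you fix $\bar{b} = \bar{a}_{0,0}$ first and then choose $\bar{c}$ depending on it. If you try to repair this by re-choosing $\bar{b} \models \lim_D \upharpoonright M\bar{c}$, you hit the real tension in the lemma: to get $\models \phi[\bar{c}, \bar{b}]$ you need $D$ to concentrate on $\{\bar{m} \in M : \models \phi[\bar{c}, \bar{m}]\}$, which in your array lies along a consistent transversal, while to get $\phi(\bar{x}, \bar{b})$ dividing over $M$ you need the $D$-coheir sequence to be $k$-inconsistent for $\phi$, i.e.\ $D$ should concentrate on a row --- yet $\bar{c}$ can satisfy $\phi(\bar{x}, \cdot)$ on at most $k-1$ elements of any row, so these two demands are in direct conflict. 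Reconciling them is precisely the content of the cited lemma, and your sketch does not supply it.
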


We are now ready to prove that forking has DFC exactly when the theory is simple. In fact, we only need the following version of DFC:

\begin{mydef}\label{weak-dfc-def}
  Forking is said to have \emph{weak dual finite character (weak DFC)} if whenever $M$ is a model and $\text{tp} (\bar{c} / M \bar{b})$ divides over $M$, there is a formula $\phi (\bar{x}, \bar{y})$ over $M$ such that:

  \begin{itemize}
    \item $\models \phi[\bar{c}, \bar{b}]$, and:
    \item $\models \phi[\bar{c}, \bar{b}']$ implies $\text{tp} (\bar{c} / M \bar{b}')$ is not finitely satisfiable in $M$.
  \end{itemize}
\end{mydef}

\begin{thm}\label{dfc-simple}
  The following are equivalent:

  \begin{enumerate}
    \item\label{eq-tsimple} $T$ is simple.
    \item\label{eq-dfc} Forking has DFC.
    \item\label{eq-weak-dfc} Forking has weak DFC.
  \end{enumerate}
\end{thm}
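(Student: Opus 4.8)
The plan is to close the loop with the cyclic chain (\ref{eq-tsimple}) $\Rightarrow$ (\ref{eq-dfc}) $\Rightarrow$ (\ref{eq-weak-dfc}) $\Rightarrow$ (\ref{eq-tsimple}). The first arrow is already established: it is precisely Lemma \ref{simple-dfc}. So all that remains is the last two arrows, the second of which is routine and the third of which carries the real content.

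For (\ref{eq-dfc}) $\Rightarrow$ (\ref{eq-weak-dfc}), I would observe that weak DFC asks for both a weaker hypothesis and a weaker conclusion than DFC, so the same witnessing formula works. Concretely, suppose $M$ is a model and $\text{tp} (\bar{c} / M \bar{b})$ divides over $M$. Then it forks over $M$, so DFC, applied with $A = M$, produces a formula $\phi (\bar{x}, \bar{y})$ over $M$ with $\models \phi[\bar{c}, \bar{b}]$ such that $\models \phi[\bar{c}, \bar{b}']$ forces $\text{tp} (\bar{c} / M \bar{b}')$ to fork over $M$. To conclude, I invoke the standard fact (valid in any theory, with no appeal to Morley sequences) that a type finitely satisfiable in $M$ does not fork over $M$; its contrapositive upgrades ``forks over $M$'' to ``not finitely satisfiable in $M$'', which is exactly what weak DFC requires. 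Hence the same $\phi$ witnesses weak DFC.

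For (\ref{eq-weak-dfc}) $\Rightarrow$ (\ref{eq-tsimple}), I would argue by contraposition, and here Fact \ref{fail-symmetry} does the heavy lifting. Assume $T$ is not simple and fix the model $M$ and tuples $\bar{b}, \bar{c}$ it provides, so that $\text{tp} (\bar{b} / M \bar{c})$ is finitely satisfiable in $M$ while $\text{tp} (\bar{c} / M \bar{b})$ divides over $M$. If weak DFC held, the dividing of $\text{tp} (\bar{c} / M \bar{b})$ would yield a formula $\phi (\bar{x}, \bar{y})$ over $M$ with $\models \phi[\bar{c}, \bar{b}]$ all of whose instances force non-finite-satisfiability. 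The decisive move is to feed this $\phi$ into the finite satisfiability of the \emph{other} type: since $\models \phi[\bar{c}, \bar{b}]$, the formula $\phi (\bar{c}, \bar{y})$ (over $M \bar{c}$) belongs to $\text{tp} (\bar{b} / M \bar{c})$, so by finite satisfiability there is $\bar{b}' \in M$ with $\models \phi[\bar{c}, \bar{b}']$.

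The payoff is that $\bar{b}' \in M$ forces $M \bar{b}' = M$, so the conclusion of weak DFC would declare $\text{tp} (\bar{c} / M)$ to be not finitely satisfiable in $M$ — an absurdity, since every complete type over a model is finitely satisfiable in that model (by elementarity each of its formulas has a realization inside $M$). This contradiction shows weak DFC fails, completing the contrapositive. I expect this final implication to be the main obstacle: not that any individual step is technically demanding, but it is the one place where the argument must genuinely exploit the asymmetry between dividing and finite satisfiability supplied by Fact \ref{fail-symmetry}, whereas the other two arrows are essentially formal.
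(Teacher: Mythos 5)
Your proposal is correct and follows essentially the same route as the paper: the cycle (\ref{eq-tsimple}) $\Rightarrow$ (\ref{eq-dfc}) $\Rightarrow$ (\ref{eq-weak-dfc}) $\Rightarrow$ (\ref{eq-tsimple}), with the middle arrow reduced to ``finite satisfiability implies nonforking'' and the last arrow obtained from Fact \ref{fail-symmetry} by using finite satisfiability of $\text{tp}(\bar{b}/M\bar{c})$ to find $\bar{b}' \in M$ satisfying $\phi(\bar{c},\bar{y})$ and then noting that $\text{tp}(\bar{c}/M)$ is finitely satisfiable in $M$. The only difference is cosmetic: the paper phrases the last step as showing no formula $\phi$ can witness weak DFC, while you phrase it as a contradiction from an assumed witness.
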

\begin{proof}
  (\ref{eq-tsimple}) implies (\ref{eq-dfc}) is Lemma \ref{simple-dfc}, and (\ref{eq-dfc}) implies (\ref{eq-weak-dfc}) is because finite satisfiability implies nonforking. We show (\ref{eq-weak-dfc}) implies (\ref{eq-tsimple}). Assume $T$ is not simple. Fix $M$ and $\bar{b}$, $\bar{c}$ as given by Fact \ref{fail-symmetry}. In particular, $p := \text{tp} (\bar{c} / M \bar{b})$ divides over $M$. Let $\phi (\bar{x}, \bar{y})$ be a formula over $M$ such that $\models \phi[\bar{c}, \bar{b}]$. By assumption, $\text{tp} (\bar{b} / M\bar{c})$ is finitely satisfiable in $M$, so in particular there is $\bar{b}' \in M$ such that $\models \phi[\bar{c}, \bar{b}']$. Thus $\text{tp} (\bar{c} / M \bar{b}') = \text{tp} (\bar{c} / M)$ must be finitely satisfiable over $M$, hence $\phi (\bar{x}, \bar{y})$ cannot witness weak DFC for $p$. Since $\phi$ was arbitrary, this shows weak DFC fails.
\end{proof}

We end by pointing out that all the results of this paper could be formalized in a weak fragment of ZFC, such as ZFC - Replacement - Power set + ``For any set $X$ of size $\le |T|$, $\mathcal{P} (\mathcal{P} (X))$ exists''\footnote{Formally, we have to work in work in a language containing a constant symbol standing for $|T|$.}. Going further, it would be interesting to extend Harnik's work on the reverse mathematics of stability theory \cite{harnik85, harnik87} by finding the exact proof-theoretic strength of the existence of Morley sequences.

 \bibliographystyle{amsalpha}
 \bibliography{morley-seq}

\providecommand{\bysame}{\leavevmode\hbox to3em{\hrulefill}\thinspace}
\providecommand{\MR}{\relax\ifhmode\unskip\space\fi MR }
\providecommand{\MRhref}[2]{%
  \href{http://www.ams.org/mathscinet-getitem?mr=#1}{#2}
}
\providecommand{\href}[2]{#2}
\begin{thebibliography}{Mak84}

\bibitem[Adl09]{adler-rank}
Hans Adler, \emph{Thorn-forking as local forking}, Journal of Mathematical
  Logic \textbf{9} (2009), no.~1, 21--38.

\bibitem[Bal10]{baldwinfom}
John~T. Baldwin, \emph{The use of replacement in model theory}, Posted on the
  FOM mailing list on Jan. 28, 2010. URL:
  \url{http://www.cs.nyu.edu/pipermail/fom/2010-January/014347.html}, 2010.

\bibitem[Bal13]{baldwin-monster}
\bysame, \emph{How big should the monster model be?}, Version from July 2,
  2013. Available online. URL:
  \url{http://www.math.uic.edu/~jbaldwin/pub/monster4.pdf}, 2013.

\bibitem[Che14]{chernikov-ntp2}
Artem Chernikov, \emph{Theories without the tree property of the second kind},
  Annals of Pure and Applied Logic \textbf{165} (2014), no.~2, 695--723.

\bibitem[GIL02]{primer}
Rami Grossberg, Jos{\'e} Iovino, and Olivier Lessmann, \emph{A primer of simple
  theories}, Archive for Mathematical Logic \textbf{41} (2002), no.~6,
  541--580.

\bibitem[Har85]{harnik85}
Victor Harnik, \emph{Stability theory and set existence axioms}, The Journal of
  Symbolic Logic \textbf{50} (1985), no.~1, 123--137.

\bibitem[Har87]{harnik87}
\bysame, \emph{Set existence axioms for general (not necessarily countable)
  stability theory}, Annals of Pure and Applied Logic \textbf{34} (1987),
  231--243.

\bibitem[Kim01]{kim-sym}
Byunghan Kim, \emph{Simplicity, and stability in there}, The Journal of
  Symbolic Logic \textbf{66} (2001), no.~2, 822--836.

\bibitem[KS]{kpsh975}
Itay Kaplan and Saharon Shelah, \emph{A dependent theory with few
  indiscernibles}, Preprint. URL: \url{http://arxiv.org/abs/1010.0388v4}.

\bibitem[Mak84]{makkai-survey}
Michael Makkai, \emph{A survey of basic stability theory, with particular
  emphasis on orthogonality and regular types}, Israel Journal of Mathematics
  \textbf{49} (1984), no.~1, 181--238.

\bibitem[She80]{sh93}
Saharon Shelah, \emph{Simple unstable theories}, Annals of Mathematical Logic
  \textbf{19} (1980), no.~3, 177--203.

\bibitem[She85]{sh197}
\bysame, \emph{Monadic logic: Hanf numbers}, Around Classification Theory of
  Models, Springer-Verlag, 1985, pp.~203--223.

\bibitem[Tsu14]{tsuboi-dividing}
Akito Tsuboi, \emph{Dividing and forking - a proof of the equivalence}, RIMS
  Kokyuroku \textbf{1888} (2014), 23--28.

\bibitem[TZ12]{tentzieglerbook}
Katrin Tent and Martin Ziegler, \emph{A course in model theory}, Lecture Notes
  in Logic, vol.~40, Cambridge University Press, 2012.

\end{thebibliography}

\end{document}